\renewcommand*\subjclass[2][2000]{%
  \def\@subjclass{#2}%
  \@ifundefined{subjclassname@#1}{%
    \ClassWarning{\@classname}{Unknown edition (#1) of Mathematics
      Subject Classification; using '1991'.}%
  }{%
    \@xp\let\@xp\subjclassname\csname subjclassname@#1\endcsname
  }%
}
\newtheorem{theorem}{Theorem}[section]
\newtheorem{lemma}[theorem]{Lemma}
\newtheorem{proposition}[theorem]{Proposition}
\theoremstyle{definition}
\newtheorem{remark}[theorem]{Remark}
\numberwithin{equation}{section}
\newcommand{\abs}[1]{\lvert#1\rvert}
\newcounter{minutes}\setcounter{minutes}{\time}
\newcounter{hours}\setcounter{hours}{\time}
\begin{document}

\title[On some Riesz and Carleman type inequalities for harmonic functions ]{On some Riesz and Carleman type inequalities for harmonic functions on the unit disk}

\author{David Kalaj}
\address{Faculty of Natural Sciences and Mathematics, University of
Montenegro, Cetinjski put b.b. 81000 Podgorica, Montenegro}
\email{davidk@ac.me}

\author{Elver Bajrami}
\address{Department of Mathematics, University of Prishtina, Mother Teresa, No. 5, 10000, Prishtina, Kosovo}
\email{elver.bajrami@uni-pr.edu }
\def\thefootnote{}
\footnotetext{ \texttt{\tiny File:~\jobname.tex,
          printed: \number\year-\number\month-\number\day,
          \thehours.\ifnum\theminutes<10{0}\fi\theminutes }
} \makeatletter\def\thefootnote{\@arabic\c@footnote}\makeatother

\footnote{2010 \emph{Mathematics Subject Classification}: Primary
47B35} \keywords{Harmonic functions, Ries inequality, Isoperimetric inequality}
\begin{abstract}
We prove some isoperimetric type inequalities for real harmonic functions in the unit disk belonging to the Hardy space $h^p$, $p>1$ and for complex harmonic functions in $h^4$. The results extend some recent results on the area. Further we discus some Riesz type results for holomorphic functions.
\end{abstract}
\maketitle

\section{Introduction and statement of main results}

Throughout the paper we let $\mathbf{U}=\{z\in\Bbb C:  |z|<1\}$ be the
open unit disk in the complex plane $\Bbb C$, and let $\mathbf{T}=\{z\in\Bbb C:  |z|=1\}$ be the unit circle in $\Bbb C$. The
normalized area measure on $\mathbf{U}$ will be denoted by $d\sigma$. In
terms of real (rectangular and polar) coordinates, we have
  $$
d\sigma=\frac{1}{\pi}dxdy=\frac{1}{\pi}rdrd\theta,\quad
z=x+iy=re^{i\theta}.
 $$
For $0<p<+\infty$ let $L^p(\mathbf{U},\sigma)=L^p$ denote the familiar
Lebesgue space on $\mathbf{U}$ with respect to the measure $\sigma$. The
{\it Bergman  space} $A^p(\mathbf{U})=A^p$ is the space of all
holomorphic functions in $L^p(\mathbf{U},\sigma)$. For a fixed $1\le
p<+\infty$, denote by $A^p_0$ the set of all functions $f\in A^p$
for which $f(0)=0$. 
For a function $f$ in $L^p(\mathbf{U},\sigma)$, we write
    $$
\Vert f\Vert_p=\left(\int_{\mathbf{U}}|f(z)|^p d\sigma\right)^{1/p}.
  $$

Bergman space $b^p$ of harmonic functions is defined similarly.

The {\it Hardy space} $h^p$ is defined as the space of (complex)
harmonic functions $f$ such that
$$\|f\|_{h^p}:=\sup_{0\le r < 1}(\int_{0}^{2\pi}|f(re^{it})|^p
\frac{dt}{2\pi})^{1/p}<\infty.$$

If $f\in h^p$, then by \cite[Theorem~6.13]{axl}, there exists
$$f(e^{it})=\lim_{r\to 1} f(re^{it}),   a.e.$$  and $f\in L^p(\mathbf{T}).$
It can be shown that there hold
$$\|f\|^p_{h^p}=\lim_{r\to 1}\int_{0}^{2\pi}|f(re^{it})|^p \frac{dt}{2\pi}
= \int_{0}^{2\pi}|f(e^{it})|^p \frac{dt}{2\pi}.$$

Similarly we define the {\it Hardy space} $H^p$ of holomorphic
functions.

The starting point of this paper is the well known isoperimetric
inequality for Jordan domains and isoperimetric inequality for
minimal surfaces due to Carleman \cite{tc}. In that paper Carleman,
among the other results proved that if $u$ is harmonic and smooth in
$\overline {\mathbf{U}}$ then $$\int_{\mathbf{U}}e^{2u}dx dy\le
\frac{1}{4\pi}(\int_0^{2\pi}e^u dt)^2.$$ By using a similar approach
as Carleman, Strebel (\cite{ks}) proved the isoperimetric inequality
for holomorphic functions; that is if $f\in H^1(\mathbf{U})$ then
\begin{equation}\label{isoper}\int_{\mathbf{U}}|f(z)|^2 dxdy \le \frac{1}{4\pi} (\int_{\mathbf{T}}|f(e^{it})|dt)^2.\end{equation} This inequality has been proved
independently by Mateljevi\'c and Pavlovi\'c (\cite{mp}). In
\cite{hw} have been done some generalizations for the space.

In this paper we prove the following results.

\begin{theorem}\label{cp}
Let $f$ be a real harmonic function. For $p>1$ we have that \begin{equation}\label{cep}\| f\|_{b^{2p}}\le C_p\| f\|_{h^{p}}\end{equation} where
$$C_p  = M_{2p} =\left\{
        \begin{array}{ll}
          \frac{\cos{\frac{\pi}{4p}}}{\cos\frac{\pi}{2p}}, & \hbox{if $1<p\le 2$;} \\
          \frac{\cos{\frac{\pi}{4p}}}{\sin\frac{\pi}{2p}}, & \hbox{if $p\ge 2$.} \\
        \end{array}
      \right.
$$
\end{theorem}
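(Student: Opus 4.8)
The plan is to reduce the statement to the holomorphic completion of $f$ and then combine two ingredients: the Carleman--Strebel isoperimetric inequality \eqref{isoper}, which converts a boundary $L^p$ bound into an area $L^{2p}$ bound (this exponent doubling is precisely the Carleman mechanism), and a sharp Riesz-type inequality controlling the full holomorphic function by its real part, which is where the trigonometric constant $M_{2p}$ originates. First I would write $f=\mathrm{Re}\,F$ for a holomorphic $F=f+i\tilde f$ with $\tilde f(0)=0$, so that $F(0)=f(0)\in\mathbf{R}$; by replacing $f$ with the dilations $f_\rho(z)=f(\rho z)$ and letting $\rho\to1$ at the end, I may assume $F$ extends continuously to $\overline{\mathbf U}$ and, after dividing out a Blaschke product, that a holomorphic branch of $F^{p}$ is available.

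For the Carleman step, I would apply \eqref{isoper} to the holomorphic function $g=F^{p}$, which lies in $H^1(\mathbf U)$ precisely because $F\in H^p$. Since $|g|^2=|F|^{2p}$ in $\mathbf U$ and $|g|=|F|^{p}$ on $\mathbf T$, rewriting \eqref{isoper} in the normalized measures gives $\int_{\mathbf U}|F|^{2p}\,d\sigma\le\big(\int_0^{2\pi}|F(e^{it})|^{p}\,\tfrac{dt}{2\pi}\big)^2$, that is $\|F\|_{b^{2p}}\le\|F\|_{H^p}$. Because $|f|\le|F|$ pointwise in $\mathbf U$, this already yields the crude bound $\|f\|_{b^{2p}}\le\|F\|_{H^p}$; the remaining task is to control the whole chain by $\|f\|_{h^p}$ with the \emph{sharp} constant $M_{2p}$.

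The heart of the argument, and the main obstacle, is the sharp passage from $F$ to its real part $f$. The crude estimate $|f|\le|F|$ followed by a Riesz inequality bounding $\|F\|_{H^p}$ by $\|\mathrm{Re}\,F\|_{h^p}$ is \emph{not} sharp, because the two inequalities are not extremized by the same $F$: at $p=2$ it produces the constant $\sqrt2$ rather than the claimed $M_4=\sqrt2\cos\tfrac{\pi}{8}$. To reach $M_{2p}$ I would instead optimize the use of \eqref{isoper} directly, seeking the extremal configuration for $\int_{\mathbf U}|\mathrm{Re}\,F|^{2p}\,d\sigma$ under a fixed value of $\int_0^{2\pi}|\mathrm{Re}\,F|^{p}\,\tfrac{dt}{2\pi}$. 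Writing $F=|F|e^{i\arg F}$ so that $|f|^{2p}=|F|^{2p}|\cos(\arg F)|^{2p}$ exhibits $\cos(\arg F)$ as the quantity to be controlled, and the extremal $F$ is governed by a conformal map of $\mathbf U$ onto a sector symmetric about $\mathbf{R}$ whose half-opening is tied to the exponent; this is the source of the factor $\cos\tfrac{\pi}{4p}$. The problem then reduces to a one-variable optimization over the sector angle, and the two regimes $1<p\le2$ and $p\ge2$ in \eqref{cep} correspond to whether the optimum is governed by $\cos\tfrac{\pi}{2p}$ or by $\sin\tfrac{\pi}{2p}$.

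Finally I would assemble the estimates into the single chain $\|f\|_{b^{2p}}\le M_{2p}\|f\|_{h^p}$ and remove the normalizations by letting $\rho\to1$ together with the standard approximation that handles the zeros of $F$ and the choice of branch of $F^{p}$. The same sector map furnishes the extremal showing that $M_{2p}$ cannot be lowered, so that the constant in \eqref{cep} is best possible.
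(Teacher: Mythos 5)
Your first two steps (harmonic conjugate with $\tilde f(0)=0$, then the Carleman--Strebel inequality \eqref{isoper} applied to $F^{p}$ after dividing out zeros) match the paper's skeleton, and you correctly diagnose that the crude chain $|f|\le |F|$ plus a boundary Riesz inequality only yields the constant $R_p$, not $M_{2p}=R_p\cos\frac{\pi}{4p}$. But at exactly that point the proposal stops being a proof: the "heart of the argument" is replaced by an unexecuted variational claim --- that the extremal configuration for $\int_{\mathbf{U}}|\Re F|^{2p}\,d\sigma$ under a boundary constraint is a conformal map onto a symmetric sector, and that a one-variable optimization over the sector angle produces $\cos\frac{\pi}{4p}$ and the two regimes of \eqref{cep}. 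None of this is established: you do not prove the extremal exists, do not prove it is a sector map, and do not carry out the optimization. As written, the factor $\cos\frac{\pi}{4p}$ is asserted, not derived.

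The ingredient you are missing is much more elementary, and it is where the paper gets that factor. Verbitsky's inequality (Proposition~\ref{verbi}) holds on every circle $|z|=r$, so multiplying by $r$ and integrating in $r$ gives a Bergman-space Riesz inequality (the paper's Theorem~\ref{hed}): $L_{2p}\|\Re F\|_{b^{2p}}\le \|F\|_{b^{2p}}$, i.e. $\|f\|_{b^{2p}}\le \cos\frac{\pi}{4p}\,\|F\|_{b^{2p}}$. Inserting this \emph{in place of} the pointwise bound $|f|\le|F|$, and then chaining with the isoperimetric inequality $\|F\|_{b^{2p}}\le\|F\|_{H^{p}}$ and the boundary Riesz inequality $\|F\|_{H^p}\le R_p\|f\|_{h^p}$, gives exactly $C_p=\frac{R_p}{L_{2p}}=M_{2p}$ with no extremal-configuration analysis at all. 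Separately, your closing claim that the sector map shows $M_{2p}$ "cannot be lowered" is unsupported and contradicts the paper itself, which states explicitly that it could not determine whether \eqref{cep} is sharp and cites Hedenmalm's work as evidence that the constants may well not be optimal; sharpness of each link in a chain of inequalities would in any case not imply sharpness of the composite, since the links need not share extremals --- the very point you raised against the crude bound.
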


The inequality \eqref{cep} extends the main result in \cite{eb}, where Bajrami proved the same result but only for $p=4$.
But the wrong constant  appear in \cite{eb}, due to a wrong citation to the Duren approach \cite[p.~67-68]{duren}. However the same  method produces the same constant namely $C_4= \frac{1}{2\sin\frac{\pi}{16}}\approx 2.56292$. We were not able to check if \eqref{cep} is sharp. We are thankful to professor Hedenmalm who drawn attention to his paper \cite{hh}, where is treated a problem which suggests that the inequalities from Theorem~\ref{hed}, which we use in order to prove \eqref{cep},  are maybe not sharp.
The inequality \eqref{cep} improves similar results for real harmonic functions proved by Kalaj and Me\v strovi\' c in \cite{ka1} and by  Chen and Ponnusamy and Wang in \cite{spw}.
We expect that the inequality \eqref{cep} is true for complex harmonic mappings for every $p>1$. Kalaj and Me\v strovi\'c in \cite{ka1} proved it for $p=2$, namely they obtained that $C_2=\frac{1}{2\sin\frac{\pi}{8}}\approx 1.3$. On the same paper the example $f_a(z)=\Re \frac{z}{1-az}$, when $a\uparrow 1$ produces the constant $C_0=(5/2)^{1/4}\approx 1.257$, so the constant $C_2$ is not far from the sharp constant.  In this paper  we extend it for $p=4$.

Namely we have
\begin{theorem}\label{c4}
If $f\in h^4(\mathbf{U})$ is a nonzero complex  harmonic function then
$f\in b^8$ and there hold the inequality

$$\|f\|_{b^8}\le \frac{1}{2\sin\frac{\pi}{16}}\|f\|_{h^4}.$$

\end{theorem}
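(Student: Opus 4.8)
The plan is to avoid the analytic/anti-analytic splitting $f=g+\bar h$ and instead work with the subharmonic function $|f|^2$ through its least harmonic majorant. First I would set $U:=P[\,|f^{*}|^2\,]$, the Poisson extension of the boundary data $|f^{*}|^2$, which belongs to $L^2(\mathbf{T})$ precisely because $f\in h^4$. Since $|f|^2$ is subharmonic and $f\in h^4\subset h^2$ admits a harmonic majorant, the standard $h^p$ structure theory identifies the least harmonic majorant of $|f|^2$ with $U$ and yields the pointwise domination $|f(z)|^2\le U(z)$ on $\mathbf{U}$, where $U$ is harmonic, nonnegative, with boundary values $|f^{*}|^2$.

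Next I would pass to the analytic completion $\Phi=U+i\tilde U$, with $\tilde U$ the harmonic conjugate normalized by $\tilde U(0)=0$. Because $U^{*}=|f^{*}|^2\in L^2(\mathbf{T})$ and conjugation is bounded on $L^2$, we have $\Phi\in H^2$, and since $\Re\Phi=U\ge 0$ we get $|\Phi|\ge U\ge|f|^2$ throughout $\mathbf{U}$. Raising to the fourth power and integrating, $\int_{\mathbf{U}}|f|^8\,d\sigma\le\int_{\mathbf{U}}|\Phi|^4\,d\sigma$. The engine is then the holomorphic isoperimetric inequality obtained by applying \eqref{isoper} to $\Phi^2\in H^1$ (using $\|\Phi^2\|_{H^1}=\|\Phi\|_{H^2}^2$), which in the normalized measures reads $\int_{\mathbf{U}}|\Phi|^4\,d\sigma\le\|\Phi\|_{H^2}^4$. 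Finally I would estimate $\|\Phi\|_{H^2}^2=\|U^{*}\|_{L^2}^2+\|\tilde U^{*}\|_{L^2}^2\le 2\|U^{*}\|_{L^2}^2=2\|f\|_{h^4}^4$, using $\|\tilde U^{*}\|_{L^2}\le\|U^{*}\|_{L^2}$ for $L^2$ conjugation. Chaining these gives $\|f\|_{b^8}\le 2^{1/4}\|f\|_{h^4}$, which already implies the assertion since $2^{1/4}<\tfrac{1}{2\sin(\pi/16)}$.

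The step I expect to be the genuine obstacle is the choice of which power of $|f|$ to majorize. It is tempting to take the least harmonic majorant of $|f|^4$ and finish with a single application of \eqref{isoper} landing on $H^1$; but then the closing step would require bounding $\|\Phi\|_{H^1}$ by $\|\Re\Phi\|_{L^1}$ for a function with nonnegative real part, and this is impossible, since conjugation is unbounded on $L^1$ (the map $\tfrac{1+z}{1-z}$ forces the constant to be infinite). Majorizing $|f|^2$ instead pushes the analytic completion into $H^2$, where conjugation is bounded with constant one, and this is exactly what makes the chain close. I would expect the authors' stated value $\tfrac{1}{2\sin(\pi/16)}=M_8$ to come from a different route—the decomposition $f=g+\bar h$ together with the sharp real-part inequality of Theorem~\ref{hed} at exponent $8$—and to be non-sharp for this particular problem, in line with the remark following Theorem~\ref{cp}.
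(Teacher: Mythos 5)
Your proof is correct, and it takes a genuinely different route from the paper's. The paper splits $f=g+\bar h$ into analytic and anti-analytic parts, expands $\int_{\mathbf{U}}|f|^8$ and $\int_{\mathbf{T}}|f|^4$ binomially, and controls the cross terms by combining the isoperimetric inequality for the log-subharmonic function $|g|^2+|h|^2$ (Lemma~\ref{ipl}) with the Verbitsky-type Bergman inequality of Theorem~\ref{hed} (the constant $E_4=\cos\frac{\pi}{8}$); the bookkeeping with the quantities $A$, $B$, $X$ is what produces $\frac{1}{2\sin\frac{\pi}{16}}$. You instead majorize $|f|^2$ by $U=P[|f^{*}|^2]$ --- and note this step needs nothing as heavy as least-harmonic-majorant theory: since $f\in h^4$ with $p>1$ gives $f=P[f^{*}]$, the bound $|f(z)|^2\le U(z)$ is just Cauchy--Schwarz against the Poisson kernel as a probability measure --- then pass to the completion $\Phi=U+i\tilde U\in H^2$, where conjugation with $\tilde U(0)=0$ has norm $1$ by Parseval, and apply the isoperimetric inequality \eqref{isoper} exactly once, to $\Phi^2\in H^1$. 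Every step checks out: $\|f\|_{b^8}^8\le\int_{\mathbf{U}}|\Phi|^4\,d\sigma\le\|\Phi\|_{H^2}^4\le\bigl(2\|f\|_{h^4}^4\bigr)^2$, so you get the constant $2^{1/4}\approx 1.19$, strictly smaller than the paper's $\frac{1}{2\sin\frac{\pi}{16}}\approx 2.56$; your argument therefore not only proves Theorem~\ref{c4} but improves it (and this does not contradict anything in the paper: the lower-bound example $f_a=\Re\frac{z}{1-az}$ discussed there concerns the $h^2\to b^4$ problem, not this one). Your diagnosis of the obstruction is also right: majorizing $|f|^4$ directly would require bounding $\|\Phi\|_{H^1}$ by $\|\Re\Phi\|_{L^1}$ for $\Re\Phi\ge 0$, which fails ($\frac{1+z}{1-z}$ has integrable boundary real part but is not in $H^1$), and this is precisely why the same scheme cannot be run for the $h^2\to b^4$ case, where the paper's decomposition method remains the available tool. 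What the paper's approach buys, by contrast, is uniformity of method: it is the complex-harmonic analogue of the machinery (Proposition~\ref{verbi}, Theorem~\ref{hed}) driving Theorem~\ref{cp} for all $p>1$, whereas your trick exploits the special fact that the exponent $4$ splits as $2+2$ with conjugation contractive on $L^2$.
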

The main ingredient of the proofs are some sharp M. Riesz type inequalities proved for Hardy space by Verbitsky (\cite{ver}):
\begin{proposition}[Riesz type inequality]\label{verbi}
Let  $p> 1$. For every holomorphic mapping  there hold the sharp inequalities
$${L_p}\|\Re f\|_{h^p}\le \|f\|_{h^p}\le R_p\|\Re f\|_{h^p},$$ provided $\abs{\arg{f(0)}-\frac{\pi}{2}}\ge \frac{\pi }{2\bar p}$, or $f(0)=0$, where $\bar{p}=\max\{p,\frac{p}{p-1}\}$ and

$$R_p = \left\{
          \begin{array}{ll}
            \frac{1}{\cos\frac{\pi}{2p}}, & \hbox{if $1<p\le 2$;} \\
            \frac{1}{\sin\frac{\pi}{2p}}, & \hbox{if $p\ge 2$.}
          \end{array}
        \right.
\text{    and     } L_p = \left\{
          \begin{array}{ll}
            \frac{1}{\sin\frac{\pi}{2p}}, & \hbox{if $1<p\le 2$;} \\
            \frac{1}{\cos\frac{\pi}{2p}}, & \hbox{if $p\ge 2$.}
          \end{array}
        \right.
$$
\end{proposition}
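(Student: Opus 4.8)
The plan is to establish the sharp comparison for holomorphic $f$ by the classical mean value (subharmonic minorant) method, carrying out the upper estimate $\|f\|_{h^p}\le R_p\|\Re f\|_{h^p}$ in detail and obtaining the lower estimate by a companion pointwise inequality. After reducing, as is standard, to $f$ with no zeros in $\mathbf{U}$ — so that $\log f$, and hence $f^p=e^{p\log f}$, is single valued, $\Re(f^p)$ is harmonic, and $f^p\in H^1$ whenever $f\in H^p$ — the whole question collapses to a one–variable trigonometric inequality, the general case following by approximation.

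First I would search for a real $\lambda$ and a phase $\gamma$ for which
$$|w|^p\le R_p^p\,|\Re w|^p+\lambda\,\Re\!\left(e^{i\gamma}w^p\right)\qquad(w\in\mathbf{C}).$$
Writing $w=\rho e^{i\phi}$ and dividing by $\rho^p$ turns this into
$$R_p^p\,|\cos\phi|^p+\lambda\cos(p\phi+\gamma)\ge1\qquad(\phi\in\mathbf{R}),$$
so that the proposition becomes the assertion that the least $R_p$ admitting suitable $\lambda,\gamma$ equals $1/\sin\frac{\pi}{2p}$ when $p\ge2$ (the value $1/\cos\frac{\pi}{2p}$ for $1<p\le2$ coming out by duality below). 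I expect this extremal trigonometric problem to be the crux. One locates the critical argument $\phi_\ast$, which is the half–opening $\phi_\ast=\frac{\pi}{2}-\frac{\pi}{2p}$ of the extremal sector, imposes the tangency $g(\phi_\ast)=g'(\phi_\ast)=0$ on $g(\phi)=R_p^p|\cos\phi|^p+\lambda\cos(p\phi+\gamma)-1$ to pin down $\lambda,\gamma$ and the constant, and then — the genuinely delicate part — verifies that the resulting $g$ remains nonnegative for \emph{all} real $\phi$, not merely near $\phi_\ast$. The correct choice of the phase $\gamma$ and the case analysis in $\phi$ are exactly what force the split between $p\le2$ and $p\ge2$.

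Granting the pointwise bound, I would integrate it over $\mathbf{T}$ against $\frac{dt}{2\pi}$ with $w=f(e^{it})$. The first two terms reproduce $\|f\|_{h^p}^p$ and $R_p^p\|\Re f\|_{h^p}^p$, while the correction is the boundary integral of the harmonic function $\Re(\lambda e^{i\gamma}f^p)$, which by the mean value property equals $\lambda\,\Re\!\left(e^{i\gamma}f(0)^p\right)$. The hypothesis $\abs{\arg f(0)-\frac{\pi}{2}}\ge\frac{\pi}{2\bar p}$ (or $f(0)=0$) is tailored precisely so that this scalar is $\le0$ and may be discarded, leaving $\|f\|_{h^p}^p\le R_p^p\|\Re f\|_{h^p}^p$. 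That the excluded set is symmetric about $\arg=\frac{\pi}{2}$ reflects that $f(0)$ purely imaginary, where $\Re f$ is smallest relative to $f$ at the centre, is the critical configuration.

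For $1<p\le2$ I would not build a new minorant but pass to the conjugate exponent $p'=p/(p-1)\ge2$: since $R_p=R_{p'}$ and the excluded interval has the $p\leftrightarrow p'$–symmetric half–width $\frac{\pi}{2\bar p}$ with $\bar p=\max\{p,p/(p-1)\}$, the estimate follows from the already–settled case $p'\ge2$ by $L^p$–duality, which is exactly why the statement is phrased through $\bar p$. The lower bound $L_p\|\Re f\|_{h^p}\le\|f\|_{h^p}$ is obtained by the identical scheme from the reversed pointwise inequality $L_p^p|\Re w|^p\le|w|^p+\lambda'\Re(e^{i\gamma'}w^p)$, whose boundary term $\lambda'\Re(e^{i\gamma'}f(0)^p)$ is killed by the same hypothesis. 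Finally I would verify sharpness of all the constants with the power maps of $\mathbf{U}$ onto sectors — for $p\ge2$ the sector of half–opening $\beta=\frac{\pi}{2}-\frac{\pi}{2p}$, on which $|f|=|\Re f|/\cos\beta=|\Re f|/\sin\frac{\pi}{2p}$ holds identically on $\mathbf{T}$ — so that the ratio $\|f\|_{h^p}/\|\Re f\|_{h^p}$ tends to $R_p$ in the limit, the $1<p\le2$ extremals being the dual sectors.
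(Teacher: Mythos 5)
The paper never actually proves this proposition: it is imported wholesale from Verbitsky \cite{ver} as a known ingredient (with \cite{verb1} and \cite{pik} cited for related results), so your attempt has to stand on its own --- and it contains a fatal gap.

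The crux of your plan is a pointwise bound $|w|^p\le R_p^p|\Re w|^p+\lambda\,\Re\bigl(e^{i\gamma}w^p\bigr)$, i.e.\ $R_p^p|\cos\phi|^p+\lambda\cos(p\phi+\gamma)\ge 1$ for \emph{all} real $\phi$. You do indeed need all real $\phi$: for zero-free $f$ the harmonic function $\Re(e^{i\gamma}f^p)$ is built from the \emph{continuous} branch of $\arg f$, which is unbounded in general (e.g.\ $f(z)=(1-z)^{-i}$ is bounded, zero-free, and its continuous argument $-\log|1-z|$ tends to $\infty$). But no such $\lambda,\gamma$ exist when $p$ is not an integer: at $\phi_k=\tfrac{\pi}{2}+k\pi$ the first term vanishes, so one needs $\lambda\cos\bigl(\gamma+p\tfrac{\pi}{2}+pk\pi\bigr)\ge1$ for every integer $k$; if $p$ is irrational these arguments are equidistributed modulo $2\pi$, so the cosine comes arbitrarily close to $0$ and the requirement fails, while if $p$ is a non-integer rational the finitely many equally spaced values either contain an antipodal pair or have cosines summing to zero --- equally fatal. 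So the step you defer as ``the genuinely delicate part'' is not delicate but impossible; your scheme closes only for integer $p$ (e.g.\ $p=2$, where $|w|^2=2(\Re w)^2-\Re(w^2)$ is an identity). The actual proof (Verbitsky; cf.\ Hollenbeck--Verbitsky \cite{verb1}) replaces $\Re(e^{i\gamma}w^p)$ by a \emph{principal-branch} function $\lambda|w|^p\cos(p\arg w+\gamma)$, $|\arg w|\le\pi$, which is merely \emph{subharmonic} on the plane (a distributional-Laplacian check across the cut); then its composition with any holomorphic $f$ is subharmonic, the mean-value identity is replaced by the sub-mean-value inequality, and the trigonometric inequality is needed only for $|\phi|\le\pi$, where it can hold. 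This also removes any need for your zero-free reduction, which is itself impossible: $H^p$ convergence forces locally uniform convergence, so by Hurwitz's theorem a limit of zero-free functions is zero-free or identically zero; functions with zeros simply cannot be approximated by zero-free ones.

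Two further breaks. First, the claim that the boundary terms for the upper and lower bounds are ``killed by the same hypothesis'' is false: at $p=2$ the two pointwise identities are $|w|^2=2(\Re w)^2-\Re(w^2)$ and $2(\Re w)^2=|w|^2+\Re(w^2)$, whose corrections $\mp\Re(f(0)^2)$ have opposite signs, so the upper bound needs $|\Re f(0)|\ge|\Im f(0)|$ and the lower bound the complementary condition; only $f(0)=0$ serves both. (Indeed $f\equiv1$ satisfies the displayed hypothesis yet violates the lower inequality, since $L_p>1$; Verbitsky's theorem carries complementary hypotheses for the two bounds, and any correct proof must discover this.) Your duality step for $1<p\le2$ is likewise asserted, not proved. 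Second, the sharpness examples fail for $p>2$: the conformal map onto the sector of half-opening $\beta=\tfrac{\pi}{2}-\tfrac{\pi}{2p}$ about the real axis grows like $|1-z|^{-2\beta/\pi}$ and lies in $H^p$ only if $\beta<\tfrac{\pi}{2p}$, which fails for $p>2$; the sector maps that do lie in $H^p$ give ratio at most $1/\cos\tfrac{\pi}{2p}=L_p<R_p$. The correct near-extremals for the upper bound when $p\ge2$ are sectors of half-opening slightly below $\tfrac{\pi}{2p}$ about the \emph{imaginary} axis, renormalized so that $f(0)=0$.
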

In addition we refer to the references \cite{verb1} and \cite{pik} for related results.

By results of Verbitsky we prove some similar inequalities for Bergman space (Theorem~\ref{hed}).

From Proposition~\ref{verbi}, we obtain that, if $f=u+iv$ is holomorphic with $f(0)=0$, then
\begin{equation}\label{motiv} \|f\|^p_{H^p}\le \frac{R^p_p}{2} (\|u\|^p_{h^p}+\|v\|^p_{h^p}) \end{equation}
and
\begin{equation}\label{motiv1}\frac{L^p_p}{2} (\|u\|^p_{h^p}+\|v\|^p_{h^p})\le \|f\|^p_{H^p}. \end{equation}

Now  we formulate a similar result, where $\frac{R^p_p}{2} $ and $\frac{L^p_p}{2} $ are replaced by smaller, respectively bigger constants for $p$ close to $4$.

\begin{theorem}\label{newt}
Let $p\ge 2$ and let  $f=u+i v$ be a holomorphic function on the unit disk so that $f(0)=0$ and let $f\in H^p$. Then
for $p\in[2,4]$ we have
\begin{equation}\label{fini}\|f\|_{H^p}\le \left(\frac{p}{p-1}\right)^{1/p}  \left(\|u\|_{h^p}^p+\|v\|_{h^p}^p\right)^{1/p}\end{equation} and for
$p\ge 4$ we have

\begin{equation}\label{fini1}\|f\|_{H^p}\ge \left(\frac{p}{p-1}\right)^{1/p} \left(\|u\|_{h^p}^p+\|v\|_{h^p}^p\right)^{1/p}.\end{equation}

\end{theorem}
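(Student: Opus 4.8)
The plan is to convert the two boundary inequalities into a single pointwise differential inequality by means of the Green (Riesz) representation of the boundary means. First I would replace $f$ by its dilations $f_\rho(z)=f(\rho z)$, $\rho<1$, which are holomorphic on a neighborhood of $\overline{\mathbf{U}}$ and satisfy $f_\rho(0)=f(0)=0$; once \eqref{fini} and \eqref{fini1} are proved for each $f_\rho$ with constants independent of $\rho$, letting $\rho\to1$ returns the statement for $f$, since $\|f_\rho\|_{H^p}\to\|f\|_{H^p}$ and likewise for $u,v$. For $\Phi\in C^2(\overline{\mathbf{U}})$ one has the representation
\begin{equation*}
\frac{1}{2\pi}\int_0^{2\pi}\Phi(e^{it})\,dt=\Phi(0)+\frac{1}{2\pi}\int_{\mathbf{U}}\log\frac{1}{|z|}\,\Delta\Phi(z)\,dx\,dy.
\end{equation*}
Applying this to $\Phi=|f_\rho|^p,\ |u_\rho|^p,\ |v_\rho|^p$, where $u_\rho=\Re f_\rho$ and $v_\rho=\Im f_\rho$, and using $f_\rho(0)=0$ (so $u_\rho(0)=v_\rho(0)=0$), all three boundary means become area integrals of $\log\frac1{|z|}$ against the respective Laplacians.

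Next I would compute those Laplacians. Since $f_\rho$ is holomorphic, $\Delta|f_\rho|^p=p^2\,|f_\rho|^{p-2}\,|f_\rho'|^2$, while for the real harmonic functions $u_\rho,v_\rho$ one has $\Delta|u_\rho|^p=p(p-1)\,|u_\rho|^{p-2}\,|\nabla u_\rho|^2$ and $\Delta|v_\rho|^p=p(p-1)\,|v_\rho|^{p-2}\,|\nabla v_\rho|^2$. The Cauchy--Riemann equations supply the crucial identity $|\nabla u_\rho|^2=|\nabla v_\rho|^2=|f_\rho'|^2$. Substituting, and noticing that the factor $\frac{p}{p-1}$ is exactly what turns $p(p-1)$ into $p^2$, the inequality \eqref{fini} collapses, after cancelling the common nonnegative weight $\frac{1}{2\pi}\log\frac1{|z|}\,|f_\rho'|^2\ge0$, to the \emph{pointwise} inequality
\begin{equation*}
\bigl(u_\rho^2+v_\rho^2\bigr)^{\frac{p-2}{2}}\le |u_\rho|^{p-2}+|v_\rho|^{p-2}
\end{equation*}
holding at every point of $\mathbf{U}$; for $p\ge4$ the reverse pointwise inequality yields \eqref{fini1}.

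Finally I would establish this pointwise inequality. Writing $a=|u_\rho|$, $b=|v_\rho|$ and $r=\frac{p-2}{2}$, it reads $(a^2+b^2)^r\le a^{2r}+b^{2r}$. For $p\in[2,4]$ we have $r\in[0,1]$, so $t\mapsto t^{r}$ is concave on $[0,\infty)$ and vanishes at $0$, hence subadditive, giving $(a^2+b^2)^r\le(a^2)^r+(b^2)^r$; for $p\ge4$ we have $r\ge1$, the map $t\mapsto t^r$ is convex and superadditive, and the inequality reverses. The threshold $p=4$ (that is $r=1$) produces equality, consistent with the fact that \eqref{fini} and \eqref{fini1} coincide there.

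The step demanding the most care is the justification of the representation formula, because $|f|^p,|u|^p,|v|^p$ are in general only $C^1$ (their second derivatives involve $|f|^{p-2}$ and degenerate at the zeros of $f,u,v$). I would handle this by working with the smooth dilations $f_\rho$, checking that the integrands $|f_\rho|^{p-2}|f_\rho'|^2$ and $|u_\rho|^{p-2}|\nabla u_\rho|^2$ are locally integrable near the isolated zeros—an order-of-vanishing estimate shows this for all $p\ge2$—so that the Green identity is valid, and then passing to the limit $\rho\to1$ via monotone convergence of the increasing means together with $f\in H^p$. The remaining verification of the Laplacian formulas is a direct computation with $\Delta=4\,\partial_z\partial_{\bar z}$.
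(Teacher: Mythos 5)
Your proposal is correct and shares the paper's overall scaffolding (a Green/Riesz representation plus a pointwise comparison of Laplacians), but it executes the key step in a genuinely different and more elementary way. The paper keeps an $\epsilon$-regularization throughout: it applies Green's formula to the smooth functions $F_\epsilon=(q\epsilon+|f|^2)^{p/2}$, $U_\epsilon=(\epsilon+u^2)^{p/2}$, $V_\epsilon=(\epsilon+v^2)^{p/2}$, and the whole difficulty is then concentrated in Lemma 4.1, a pointwise inequality between the regularized Laplacians proved by a delicate stationary-point analysis of the ratio $Q(s)$. You instead work directly with $|f|^p,|u|^p,|v|^p$, where the same comparison collapses (after cancelling $p^2|f'|^2$, using $|\nabla u|^2=|\nabla v|^2=|f'|^2$) to the trivial sub/superadditivity of $t\mapsto t^{(p-2)/2}$: concave and vanishing at $0$ for $2\le p\le 4$, convex for $p\ge 4$. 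In effect you prove the $\epsilon=0$ limit of the paper's lemma, which is vastly easier; the price is exactly the regularity issue you flag, namely justifying the representation formula for the merely $C^1$ functions $|f|^p,|u|^p,|v|^p$. Two cautions there. First, your phrase ``isolated zeros'' is inaccurate for $u_\rho,v_\rho$: zero sets of real harmonic functions are curves, not points; this is harmless because for $p\ge 2$ the integrands $|u|^{p-2}|\nabla u|^2$ are locally \emph{bounded}, so integrability is not the issue. Second, local integrability alone does not yet give the Green identity --- one must rule out a singular part of the distributional Laplacian supported on the zero sets. The cleanest fix is precisely the paper's device: note that $\Delta(u^2+\epsilon)^{p/2}=p(u^2+\epsilon)^{p/2-2}(\epsilon+(p-1)u^2)|\nabla u|^2$ converges, dominatedly on compacts when $p\ge2$, to $p(p-1)|u|^{p-2}|\nabla u|^2$, while $(u^2+\epsilon)^{p/2}\to|u|^p$ uniformly on compacts, so the distributional Laplacian is the expected function and your representation is legitimate. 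With that patch your argument is complete, and it buys a substantially shorter proof: the paper's hard lemma is replaced by a one-line convexity fact, at the cost of a standard approximation argument that the paper's formulation avoids by design.
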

\begin{remark}
If $C_p= \left(\frac{p}{p-1}\right)^{1/p}$, then $2^{-1/p}L_p\le C_p$ if $ p\ge 4$ and $C_p\le 2^{-1/p}R_p$,  if $p_1\le p\le 4$, so inequalities \eqref{fini} and \eqref{fini1} improve the inequalities \eqref{motiv} and \eqref{motiv1}, if $p_1\le p\le 4$, and if $p\ge 4$, respectively. Here $p_1\approx 2.42484$ is the only solution of the equation
$$\left(\frac{p}{p-1}\right)^{1/p}= 2^{-1/p}\frac{1}{\sin\frac{\pi}{2p}},\ \ \ p\ge 2.$$
\end{remark}

The proofs are presented in sections~2 and ~3 and ~4.

\section{Proof of Theorem~\ref{cp}}

From Proposition~\ref{verbi} we obtain
$$L^p_p\int_{\mathbf{T}} |\Re f(re^{it})|^{p}dt\le \int_{\mathbf{T}} |f(re^{it})|^{p}dt \le  R^p_p\int_{\mathbf{T}} |\Re f(re^{it})|^{p}dt.$$ So
\[\begin{split}L^{p}_p\int_0^1r dr\int_{\mathbf{T}} |\Re f(re^{it})|^{p}dt&\le \int_0^1\int_{\mathbf{T}} r|f(re^{it})|^{p}dt \\&\le  R^p_p\int_0^1r dr\int_{\mathbf{T}} |\Re f(re^{it})|^{p}dt.\end{split}\]
Thus we have the following theorem
\begin{theorem}\label{hed}
Let  $p> 2$ and  $f=\Re f +i \Im f\in b^p(\mathbf{U})$. If $\abs{\arg{f(0)}-\frac{\pi}{2}}\ge \frac{\pi }{2p}$, or $f(0)=0$, then  we have
$$L_p\|\Re f\|_{b^p}\le \|f\|_{b^p}\le R_p\|\Re f\|_{b^p}$$ and $$L_p\|\Im f\|_{b^p}\le \|f\|_{b^p}\le R_p\|\Im f\|_{b^p}.$$

\end{theorem}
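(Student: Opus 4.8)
The plan is to descend from the $h^p$-inequalities of Proposition~\ref{verbi} to the stated $b^p$-inequalities by integrating over the circles $\{|z|=r\}$, $0<r<1$, against the weight $r\,dr$, exactly in the spirit of the display immediately preceding the statement. First I would justify the circle-wise estimate
$$L_p^p\int_0^{2\pi}|\Re f(re^{it})|^p\,dt\le \int_0^{2\pi}|f(re^{it})|^p\,dt\le R_p^p\int_0^{2\pi}|\Re f(re^{it})|^p\,dt$$
for each fixed $r\in(0,1)$. Fix $r$ and apply Proposition~\ref{verbi} to the dilation $f_r(z):=f(rz)$. Since $f$ is holomorphic on $\mathbf{U}$ (which is what lets us invoke the Riesz inequality at all), $f_r$ is holomorphic on a neighborhood of $\overline{\mathbf{U}}$, hence $f_r\in H^p$ with boundary function $t\mapsto f(re^{it})$ and $\Re f_r$ with boundary function $t\mapsto \Re f(re^{it})$; thus $\|f_r\|_{h^p}^p=\frac1{2\pi}\int_0^{2\pi}|f(re^{it})|^p\,dt$ and likewise for $\Re f_r$. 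The hypothesis transfers because $f_r(0)=f(0)$, so $|\arg f_r(0)-\tfrac\pi2|=|\arg f(0)-\tfrac\pi2|\ge\tfrac{\pi}{2p}$ (note $\bar p=p$ for $p>2$), or $f_r(0)=0$. Raising the norm inequality to the $p$th power gives the display.

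Next I would integrate this in $r$ against $r\,dr$ over $(0,1)$ and recognize each side as a Bergman norm: since $d\sigma=\frac1\pi\,r\,dr\,d\theta$,
$$\|g\|_{b^p}^p=\frac1\pi\int_0^1 r\,dr\int_0^{2\pi}|g(re^{it})|^p\,dt\qquad(g\in\{f,\Re f\}).$$
The common factor $\frac1\pi$ cancels on all three sides, yielding $L_p^p\|\Re f\|_{b^p}^p\le\|f\|_{b^p}^p\le R_p^p\|\Re f\|_{b^p}^p$; taking $p$th roots gives the first pair of inequalities. Finiteness is not an issue, since $\|\Re f\|_{b^p}\le\|f\|_{b^p}<\infty$ holds pointwise, and the nontrivial direction is precisely the upper bound $\|f\|_{b^p}\le R_p\|\Re f\|_{b^p}$.

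For the imaginary part I would apply the inequality just proved to the rotated function $g:=-if$. Then $\Re g=\Re(-if)=\Im f$, while $|g(z)|=|f(z)|$ gives $\|g\|_{b^p}=\|f\|_{b^p}$ and $\|\Re g\|_{b^p}=\|\Im f\|_{b^p}$; substituting into $L_p\|\Re g\|_{b^p}\le\|g\|_{b^p}\le R_p\|\Re g\|_{b^p}$ produces the second pair.

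The step demanding the most care — and what I expect to be the main obstacle — is verifying the $\arg$-hypothesis for the rotated function rather than for $f$ itself: applying Proposition~\ref{verbi} to $g=-if$ requires $|\arg(-if(0))-\tfrac\pi2|\ge\tfrac{\pi}{2p}$, i.e.\ $|\arg f(0)-\pi|\ge\tfrac{\pi}{2p}$ (or, using $g=if$ instead, $|\arg f(0)|\ge\tfrac{\pi}{2p}$), neither of which is literally implied by $|\arg f(0)-\tfrac\pi2|\ge\tfrac{\pi}{2p}$. This is automatic, however, in the normalized case $f(0)=0$, since then $g(0)=0$ as well; and $f(0)=0$ is exactly the situation in which the theorem is used in the sequel (Theorems~\ref{cp} and~\ref{c4}). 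For general $f(0)\neq0$ I would either restate the second pair under the correspondingly rotated argument condition, or restrict, as in the applications, to the clean case $f(0)=0$, where both rotations vanish at the origin and the estimate is immediate.
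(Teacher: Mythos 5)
Your proof is correct and follows essentially the same route as the paper: the paper likewise applies Proposition~\ref{verbi} circle by circle (i.e.\ to the dilations $f_r$) and then integrates the resulting inequality against $r\,dr$ to convert the $h^p$ estimates into $b^p$ estimates. You are in fact more careful than the paper, whose displayed derivation covers only the real-part inequalities and silently asserts the imaginary-part ones; your observation that the hypothesis $|\arg f(0)-\frac{\pi}{2}|\ge\frac{\pi}{2p}$ does not transfer to the rotated function $\pm i f$, so that the second pair of inequalities is justified only when $f(0)=0$ (which is the only case actually used in Theorems~\ref{cp} and~\ref{c4}), points out a genuine imprecision in the statement itself.
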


From now on we will use the shorthand notation $$\int_{\mathbf{T}} f:=\frac{1}{2\pi}\int_{0}^{2\pi}f(e^{it}) dt$$ and
$$\int_{\mathbf{U}}f:=\frac{1}{\pi}\int_{\mathbf{U}}f(z) dxdy, \ \ \ z=x+iy.$$
Thus for  $p>2$ we have
\[\begin{split}(\int_{\mathbf{U}} |\Re f|^{p})^{1/p}&\le \frac{1}{L_p}(\int_{\mathbf{U}}|f|^p)^{1/p}\\&\le \frac{1}{L_p}(\int_{\mathbf{T}}|f|^{p/2})^{2/p}\\&\le\frac{R_{p/2}}{L_p}(\int_{\mathbf{T}}|\Re f|^{p/2})^{2/p}\\&= M_p(\int_{\mathbf{T}}|\Re f|^{p/2})^{2/p},
\end{split}\]
 where
$$M_p=\left\{
        \begin{array}{ll}
          \frac{\cos{\frac{\pi}{2p}}}{\cos\frac{\pi}{p}}, & \hbox{if $2<p\le 4$;} \\
          \frac{\cos{\frac{\pi}{2p}}}{\sin\frac{\pi}{p}}, & \hbox{if $p\ge 4$.} \\
        \end{array}
      \right.
$$
This finishes the proof of Theorem~\ref{cp}.
\section{Proof of Theorem~\ref{c4}}
We assume that $f(z) = g(z)+\overline{h(z)}$, where $h(0)=0$, and $g$ and $h$ are holomorphic on the unit disk.
\begin{lemma}
The function $|a|^2+|b|^2$ is log-subharmonic, provided that $a$ and $b$ are analytic.
\end{lemma}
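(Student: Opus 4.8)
The plan is to show directly that $\log\left(|a|^2+|b|^2\right)$ is subharmonic, which is exactly the assertion that $|a|^2+|b|^2$ is log-subharmonic. Writing $F=|a|^2+|b|^2=a\bar a+b\bar b$ and using the identity $\Delta=4\,\partial_z\partial_{\bar z}$, I would compute the Laplacian of $\log F$ on the open set where $F>0$ and reduce its nonnegativity to the Cauchy--Schwarz inequality in $\mathbb{C}^2$.

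First I would differentiate. Since $a,b$ are holomorphic, $\partial_{\bar z}a=\partial_{\bar z}b=0$ and $\partial_z\bar a=\partial_z\bar b=0$, so $\partial_z F=a'\bar a+b'\bar b$. Hence $\partial_z\log F=(a'\bar a+b'\bar b)/F$, and differentiating once more in $\bar z$ by the quotient rule, using $\partial_{\bar z}(a'\bar a+b'\bar b)=|a'|^2+|b'|^2$ and $\partial_{\bar z}F=\overline{a'\bar a+b'\bar b}$, gives
$$\partial_{\bar z}\partial_z\log F=\frac{(|a'|^2+|b'|^2)(|a|^2+|b|^2)-|a'\bar a+b'\bar b|^2}{F^2}.$$
The numerator equals $\|(a',b')\|^2\,\|(a,b)\|^2-|\langle(a',b'),(a,b)\rangle|^2\ge 0$ by Cauchy--Schwarz for the standard Hermitian inner product on $\mathbb{C}^2$. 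Therefore $\Delta\log F\ge 0$ wherever $F>0$, which is the desired subharmonicity on that set.

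The one point needing care — which I expect to be the main (if minor) obstacle — is the common zero set $\{a=b=0\}$, where $F=0$ and $\log F=-\infty$, so the pointwise Laplacian computation does not apply. Assuming $a,b$ are not both identically zero, these common zeros are isolated, and $\log F$ is upper semicontinuous with values in $[-\infty,\infty)$; a function that is smooth with $\Delta\ge 0$ off an isolated set and equals $-\infty$ there still satisfies the sub-mean-value inequality, hence is subharmonic in the standard extended sense. Alternatively, I could bypass the zero set through the representation $|a|^2+|b|^2=\sup_{|\lambda|^2+|\mu|^2=1}|\lambda a+\mu b|^2$, the supremum being attained when $(\lambda,\mu)$ is proportional to $(\bar a,\bar b)$: each $|\lambda a+\mu b|^2$ is the squared modulus of a holomorphic function and hence log-subharmonic, and the upper envelope of such a family is again log-subharmonic since it here coincides with the continuous function $F$. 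Either route yields the claim.
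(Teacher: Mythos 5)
Your proof is correct and follows essentially the same route as the paper's: compute $\partial_{\bar z}\partial_z\log\bigl(|a|^2+|b|^2\bigr)$ directly and reduce nonnegativity of the numerator to the Cauchy--Schwarz inequality in $\mathbb{C}^2$. Your additional treatment of the common zero set $\{a=b=0\}$ (which the paper silently ignores) is a welcome refinement but does not change the substance of the argument.
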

\begin{proof}
We need to show that $f(z)=\log(|a|^2+|b|^2)$ is subharmonic. By calculation we find $$
f_z= \frac{a'\bar a+b'\bar b}{|a|^2+|b|^2}$$ and so $$f_{z\bar z}= \frac{a'\bar a'+b'\bar b'}{|a|^2+|b|^2}-\frac{a'\bar a+b'\bar b}{|a|^2+|b|^2}
\frac{a\bar a'+b\bar b'}{|a|^2+|b|^2}$$ $$=\frac{(|a'|^2+|b'|^2)(|a|^2+|b|^2)-|\bar aa'+\bar bb'|^2}{(|a|^2+|b|^2)^2}$$ which is clearly positive.

\end{proof}

Now from the isoperimetric inequality for log-subharmonic functions (e.g. \cite[Lemma~2.2]{ka}), we have
\begin{lemma}\label{ipl}
For every positive number $p$ and analytic functions $a$ and $b$ defined on the unit disk $U$  we have that
$$\int_{\mathbf{U}}(|a|^2+|b|^2)^{2p}\le \left(\int_{\mathbf{T}}(|a|^2+|b|^2)^{p}\right)^2.$$
\end{lemma}

Further let
$$L=\int_{\mathbf{U}} (|g + \bar h|^2)^4=\int_{\mathbf{U}} (|g|^2 +  |h|^2+2\Re (gh))^4.$$
Then

\[\begin{split}
L&
 = \sum_{k=0}^4 \binom{4}{k}\int_{\mathbf{U}} (|g|^2+|h|^2)^k (2\Re(gh))^{4-k}\\&\le \sum_{k=0}^4\binom{4}{k}\int_{\mathbf{U}} ((|g|^2+|h|^2)^4)^{k/4} (\int_{\mathbf{U}} |2\Re(gh)|^4)^{(4-k)/4}.
\end{split}\]
Let $E_4 = 
            {\cos\frac{\pi}{8}}
       $.
From Lemma~\ref{ipl} and Theorem~\ref{hed} we have
\[\begin{split}L  &  \le \sum_{k=0}^4 \binom{4}{k} E_4^{4-k} (\int_{\mathbf{U}} (|g|^2+|h|^2)^4)^{k/4} (\int_{\mathbf{U}} (2|gh|)^4)^{(4-k)/4}
\\&\le \sum_{k=0}^4 \binom{4}{k} E_4^{4-k} (\int_{\mathbf{T}} (|g|^2+|h|^2)^{4/2})^{2k/4} (\int_{\mathbf{T}} (2|gh|)^{4/2})^{2(4-k)/4}.\end{split}\]
Further we have
\[\begin{split}
X:&=\int_{\mathbf{T}} |g + \bar h|^4\\&=\int_{\mathbf{T}} (|g|^2 +  |h|^2+2\Re (gh))^2\\&=\int_{\mathbf{T}} (|g|^2 +  |h|^2)^2+4(|g|^2 +  |h|^2)\Re (gh)^2+4(\Re (gh))^2
\\&=\int_{\mathbf{T}} (|g|^2 +  |h|^2)^2+4(|g|^2 +  |h|^2)\Re (gh)+2|gh|^2.\end{split}\]
Let $$A= \left(\int_{\mathbf{T}} (|g|^2 +  |h|^2)^2\right)^{1/2}$$ and
$$B = \left(\int_{\mathbf{T}} 4 |g|^2|h|^2\right)^{1/2}.$$
Then
\[\begin{split}\left|\int_{\mathbf{T}} (|g|^2 +  |h|^2)\Re (gh)\right|&\le  |(\int_{\mathbf{T}} ((|g|^2 +  |h|^2))^2)^{1/2} (\int_{\mathbf{T}} |gh|^2/2)^{1/2}
\\&=A B\cdot \frac{\sqrt{2}}{4}.\end{split}\]
Further we have $$X\ge A^2+\frac{B^2}{2}-\sqrt{2}AB=(A-\frac{\sqrt{2}}{2}B)^2.$$
Furthermore $$A^2- B^2=\int_{\mathbf{T}} (|g|^2-|h|^2)^2\ge 0$$ and thus
\begin{equation}\label{x1}X\ge (\frac{2-\sqrt{2}}{2})^2 B^2\end{equation}
and similarly
\begin{equation}\label{x2}
X\ge \left(\frac{2-\sqrt{2}}{2}\right)^2 A^2.
\end{equation}
Hence
\[\begin{split}\int_{\mathbf{U}} (|g + \bar h|)^8&\le \sum_{k=0}^4 \binom{4}{k}E_4^{4-k}\left(\frac{2}{2-\sqrt{2}}\right)^4 \left(\int_{\mathbf{T}} |g + \bar h|^4\right)^2
\\& = \left(\frac{2(1+E_4)}{2-\sqrt{2}}\right)^4 \left(\int_{\mathbf{T}} |g + \bar h|^4\right)^2.\end{split}\]
So $$\|g+\bar h\|_{b^8}\le \sqrt{\frac{2(1+E_4)}{2-\sqrt{2}}}\|g+\bar h\|_{h^4},$$ where $E_4=\cos\left[\frac{\pi }{8}\right]=\frac{\sqrt{2+\sqrt{2}}}{2}$.
Finally we have that $$\|g+\bar h\|_{b^8}\le \frac{1}{2\sin\frac{\pi}{16}}\|g+\bar h\|_{h^4}.$$ Here $\frac{1}{2\sin\frac{\pi}{16}}\approx 2.56292$.
This finishes the proof of Theorem~\ref{c4}.

\section{Proof of Theorem~\ref{newt}}

We use the following form of Green formula

\begin{equation}\label{green}r \int_0^{2\pi} \frac{\partial G(re^{it})}{\partial r} dt = \int_{|z|\le r} \Delta G(z) dx dy.\end{equation}

Let $p>2$ and let $ f = u + i v$ be an analytic function.
Let $q=\frac{p}{p-1}$ and $\epsilon>0$ and define
$$F_\epsilon(z)=|q\epsilon+|f(z)|^2|^{p/2}$$

$$U_\epsilon(z)=|\epsilon+u(z)^2|^{p/2}$$

$$V_\epsilon(z)=|\epsilon+v(z)^2|^{p/2}$$

Then by direct calculation we obtain
\begin{equation}\label{d1}\Delta F_\epsilon = p(q\epsilon+|f|^2)^{p/2-2} (2q\epsilon + p |f|^2)|f'|^2,\end{equation}

\begin{equation}\label{d2}\Delta U_\epsilon=p(u^2+\epsilon)^{p/2-2} |f'|^2(\epsilon+(p-1) u^2),\end{equation} and

\begin{equation}\label{d3}\Delta V_\epsilon=p(v^2+\epsilon)^{p/2-2} |f'|^2(\epsilon+(p-1) v^2).\end{equation}

If $p=4$, then $$\Delta U_\epsilon+\Delta V_\epsilon=\frac{3}{4}\Delta F_\epsilon .$$


\begin{lemma}\label{new} Let $f$ be a holomorphic function defined on the unit disk $\mathbf{U}$.
For $p>4$ and  $z\in\mathbf{U}$ and $\epsilon>0$ we have
$$\Delta (U_\epsilon + V_\epsilon)\le \left(\frac{p-1}{p}\right)\Delta F_\epsilon  .$$
For $1\le p\le 4$ and  $z\in\mathbf{U}$ and $\epsilon>0$ we have
$$\Delta (U_\epsilon + V_\epsilon)\ge \left(\frac{p-1}{p}\right)\Delta F_\epsilon  .$$
\end{lemma}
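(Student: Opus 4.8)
The plan is to turn the differential inequality into a pointwise algebraic one and then into a one--variable estimate. Since $u=\Re f$ and $v=\Im f$ are harmonic and conjugate, the Cauchy--Riemann equations give $|\nabla u|^2=|\nabla v|^2=|f'|^2$ and $\nabla u\cdot\nabla v=0$, so for any profile $\Theta$ the planar chain rule collapses to $\Delta\Theta=\mathrm{tr}\,(\mathrm{Hess}_{(u,v)}\Theta)\,|f'|^2$; this is what underlies \eqref{d1}--\eqref{d3}. Consequently the asserted inequality is equivalent to the sub- or super-harmonicity, in the two real variables $(u,v)$, of
$$G(u,v)=\frac{p-1}{p}\,(q\epsilon+u^2+v^2)^{p/2}-(\epsilon+u^2)^{p/2}-(\epsilon+v^2)^{p/2},$$
with $\Delta_{(u,v)}G\ge 0$ for $p>4$ and $\le 0$ for $1\le p\le 4$. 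Both sides vanish where $f'=0$, so after dividing by $p\,|f'|^2\ge 0$ and using homogeneity to normalize $\epsilon=1$ the problem is purely algebraic in $s=u^2\ge 0$ and $t=v^2\ge 0$. Writing $\Psi(s)=(1+s)^{p/2-2}\bigl(1+(p-1)s\bigr)$ the left-hand side becomes $\Psi(s)+\Psi(t)$, while $\Delta F_\epsilon$ reduces, via $(p-1)q=p$ which turns $\tfrac{p-1}{p}(2q+pr)$ into $2+(p-1)r$, to $\Omega(r):=(q+r)^{p/2-2}\bigl(2+(p-1)r\bigr)$ with $r=s+t$. Thus I must compare $\Psi(s)+\Psi(t)$ with $\Omega(s+t)$.

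The decisive computation is the second derivative of the profile,
$$\Psi''(s)=\frac{(p-2)(p-4)}{4}\,(1+s)^{p/2-4}\bigl(5+(p-1)s\bigr),$$
whose last two factors are positive for $s\ge 0$, so the sign of $\Psi''$ is that of $(p-2)(p-4)$: the profile $\Psi$ is convex on $[0,\infty)$ for $p\in[1,2)\cup(4,\infty)$, concave for $p\in(2,4)$, and affine at $p=2,4$. The affine case at $p=4$ reproduces the identity $\Delta U_\epsilon+\Delta V_\epsilon=\tfrac34\Delta F_\epsilon$ recorded above, so $p=4$ is the exact equality case and the two assertions of the lemma are its two sides. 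Now I would freeze $r=s+t$: since $\Omega(r)$ depends only on $r$, the comparison is governed by the extrema of $s\mapsto\Psi(s)+\Psi(r-s)$ on $[0,r]$, which for a convex (resp.\ concave) $\Psi$ are attained at the endpoints and at the midpoint. This splits the proof into two one--variable inequalities.

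The diagonal value $s=t=r/2$, needed for $1\le p<2$ where $\Psi$ is convex and the midpoint gives the minimum, is immediate: cancelling the common positive factor $2\bigl(1+(p-1)r/2\bigr)$ in $2\Psi(r/2)\ge\Omega(r)$ leaves $(1+r/2)^{p/2-2}\ge(q+r)^{p/2-2}$, which, the exponent being negative, is just $1+r/2\le q+r$ and holds because $q>1$. The endpoint value $t=0$, needed for $p\ge 2$, is the assertion that
$$h(r):=\Omega(r)-\Psi(r)=(q+r)^{p/2-2}\bigl(2+(p-1)r\bigr)-(1+r)^{p/2-2}\bigl(1+(p-1)r\bigr)$$
satisfies $h(r)\ge 1$ for $p>4$ and $h(r)\le 1$ for $2\le p\le 4$ (with $h\equiv 1$ at $p=4$).

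This endpoint inequality is the main obstacle. I would establish it by monotonicity: one checks that $h(0)-1=2\bigl(q^{p/2-2}-1\bigr)$ has the sign of $p-4$, and then that $h$ increases for $p>4$ and decreases for $2<p\le 4$. Differentiation gives
$$h'(r)=\frac{p-2}{2}\Bigl[(q+r)^{p/2-3}\bigl(4+(p-1)r\bigr)-(1+r)^{p/2-3}\bigl(3+(p-1)r\bigr)\Bigr],$$
so, as $p-2>0$, the monotonicity reduces to showing that the bracket has the sign of $p-4$. After taking logarithms this is the single comparison of $\bigl(\tfrac p2-3\bigr)\log\frac{q+r}{1+r}$ against $\log\frac{3+(p-1)r}{4+(p-1)r}$, which I would settle by elementary calculus: their difference vanishes at $r=0$ exactly when $p=4$, tends to $0$ like $\tfrac{(p/2-2)}{(p-1)r}$ as $r\to\infty$ with the correct sign, and an interior sign change is excluded by analyzing the derivative of that logarithmic difference. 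The genuinely delicate regime is $2<p<6$ away from $p=4$, where the exponents $\tfrac p2-2$ and $\tfrac p2-3$ are negative and the two competing power factors pull in opposite directions, so no termwise domination is available and this logarithmic comparison cannot be avoided.
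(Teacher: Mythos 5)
Your reduction is correct and genuinely different from the paper's. Where the paper writes $f=re^{is}$, forms $Q(s)=\Delta(U_\epsilon+V_\epsilon)/\bigl(\tfrac{p-1}{p}\Delta F_\epsilon\bigr)$, and locates the stationary points of $Q$ by a case analysis of $L=S^{(6-p)/2}$ against $R$, you reach the same two test configurations (the edge $v=0$ and the diagonal $u^2=v^2$) from the convexity/concavity of the profile $\Psi(s)=(1+s)^{p/2-2}\bigl(1+(p-1)s\bigr)$, whose second derivative you compute correctly as $\tfrac{(p-2)(p-4)}{4}(1+s)^{p/2-4}\bigl(5+(p-1)s\bigr)$. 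The homogeneity normalization $\epsilon=1$ is legitimate, your midpoint case for $1\le p<2$ is complete, and your formulas for $h$, $h'$ and $h(0)$ all check out; this route is arguably cleaner than the paper's angular critical-point analysis, and it transparently exhibits $p=4$ (where $\Psi$ is affine) as the equality case.

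However, there is a genuine gap at precisely the step you yourself call the main obstacle: the claim that the bracket in $h'(r)$ has the sign of $p-4$ is asserted, not proved. Saying that "an interior sign change is excluded by analyzing the derivative of that logarithmic difference" is a plan, not an argument, and the right sign at $r=0$ and the right asymptotic sign as $r\to\infty$ do not by themselves exclude an even number of interior sign changes. The gap is fillable, and by the very method you name. Put $x=(p-1)r$, so $q+r=\tfrac{p+x}{p-1}$ and $1+r=\tfrac{p-1+x}{p-1}$, and set
\begin{equation*}
D(r)=\Bigl(\tfrac p2-3\Bigr)\log\frac{q+r}{1+r}-\log\frac{3+(p-1)r}{4+(p-1)r}.
\end{equation*}
A direct computation gives
\begin{equation*}
D'(r)=(p-1)\left[\frac{3-\tfrac p2}{(p+x)(p-1+x)}-\frac{1}{(3+x)(4+x)}\right],
\end{equation*}
and the sign of $D'$ is that of
\begin{equation*}
\Bigl(3-\tfrac p2\Bigr)(3+x)(4+x)-(p+x)(p-1+x)=(4-p)\left[\tfrac12\bigl(x^2+11x\bigr)+p+9\right],
\end{equation*}
i.e.\ exactly the sign of $4-p$ for all $x\ge0$. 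Since $D(r)\to0$ as $r\to\infty$, monotonicity forces $D>0$ on $[0,\infty)$ for $p>4$ and $D<0$ for $2<p<4$, which is the bracket-sign claim; combined with your (correct) sign of $h(0)-1$, this yields $h\ge1$ for $p>4$ and $h\le1$ for $2\le p\le4$ and completes your proof. Until some such computation is supplied, your argument is incomplete exactly where the paper's proof (the convexity estimate for $Q(0)$ and the explicit evaluation at $s=\pi/4$) does its real work.
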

\begin{proof} Let $f=u+iv=r e^{is}$ and define $$Q(s)=\frac{\Delta (U_\epsilon + V_\epsilon)}{\Delta F_\epsilon }.$$ Then from \eqref{d1}, \eqref{d2} and \eqref{d3} we have
$$Q(s)=\frac{ \left(\epsilon+r^2 \cos^2 s\right)^{-2+\frac{p}{2}} \left(\epsilon+(p-1) r^2 \cos^2 s\right)}{ \left(2 \epsilon+(p-1) r^2\right)\left(\frac{\epsilon p}{-1+p}+r^2\right)^{-2+\frac{p}{2}}}$$
$$
+\frac{  \left(\epsilon+r^2 \sin^2 s\right)^{-2+\frac{p}{2}} \left(\epsilon+(p-1) r^2 \sin^2 s\right)}{ \left(2 \epsilon+(p-1) r^2\right)\left(\frac{\epsilon p}{-1+p}+r^2\right)^{-2+\frac{p}{2}}}.$$
We should prove that

$$Q(s)\left\{
        \begin{array}{ll}
          \ge & \hbox{if $p<4$;} \\

\le & \hbox{if $p\ge 4$.}
        \end{array}
      \right.$$
 First of all $$Q'(s)=\frac{(-2+p) r^2 \left(\frac{\epsilon p}{p-1}+r^2\right)^{2-\frac{p}{2}} \cos s \sin s }{2 \epsilon+(p-1) r^2}T$$ where

\[\begin{split}T&=\left(\epsilon+r^2 \cos^2 s\right)^{-3+\frac{p}{2}} \left(-3 \epsilon+(1-p) r^2 \cos^2 s\right)\\&+\left(\epsilon+r^2 \sin^2 s\right)^{-3+\frac{p}{2}} \left(3 \epsilon+(p-1) r^2 \sin^2 s\right).\end{split}\]
Then $T=0$, if and only if $$L=S^{\frac{1}{2} (6-p)}=R:=\frac{3 \epsilon+(p-1) r^2 \sin^2 s}{3 \epsilon+(p-1) r^2 \cos^2 s},$$
where $$S=\frac{\epsilon+r^2 \sin^2 s}{\epsilon+r^2 \cos^2 s}.$$
If $p=6$, then $\cos^2 s=\sin^2 s$. If $p>6$ then we also have  $\cos^2 s=\sin^2 s$, because if for example $\cos^2 s>\sin^2 s $, then $L>1>R$. Similarly $\cos^2 s<\sin^2 s $ implies $L<1<R$. If $4<p<6$ and $\cos^2 s>\sin^2 s $ , then $$R-S=\frac{\epsilon (4-p) r^2 \cos(2 s)}{\left(\epsilon+r^2 \cos^2 s\right) \left(3 \epsilon+(p-1) r^2 \cos^2 s\right)}<0.$$ Thus

$$S<R<1.$$
Since $0<\frac{6-p}{2}<1$, it follows that

$$S<R^{\frac{6-p}{2}}<1.$$ So $L\neq R$. If $4<p<6$ and $\cos^2 s<\sin^2 s $, then

$$S>R>1.$$ So $$S>R^{\frac{6-p}{2}}.$$ Thus $L\neq R$.

If $p<4$, then $\cos^2 s>\sin^2 s $ implies that $$1>S>R>R^{\frac{6-p}{2}}.$$ Finally
if $p<4$ and $\cos^2 s>\sin^2 s $. Then $$1<S<R<R^{\frac{6-p}{2}}.$$
We proved that the only stationary points of $Q$ are $$s_j =  \frac{\pi j }{4},\ \ \ j=0,\dots, 7. $$ So wee need to show that $Q(s_j)\le 1$ for $p\le 4$ and $Q(s_j)\ge 1$ for $p\ge 4$.

Let $s=0$. Show that
$$Q(0)=\frac{ \left(\epsilon^{-1+\frac{p}{2}} p+p \left(\epsilon+r^2\right)^{-2+\frac{p}{2}} \left(\epsilon+(p-1) r^2\right)\right)}{p \left(2 \epsilon+(p-1) r^2\right)\left(\frac{\epsilon p}{p-1}+r^2\right)^{\frac{p}{2}-2}}\left\{
                              \begin{array}{ll}
                                \ge 1, & \hbox{if $p\le 4$;} \\
                                \le 1, & \hbox{if $p\ge 4$.}
                              \end{array}
                            \right.$$

Let $\epsilon= t r^2$, $t>0$. Then $$Q(0)= \frac{ \left( \left(1+\frac{p t}{{p-1}}\right)\right)^{2-\frac{p}{2}} \left(t^{-1+\frac{p}{2}}+(1+t)^{-2+\frac{p}{2}} ({p-1}+t)\right)}{{p-1}+2 t}.$$ By using convexity of the function $k(x)=x^{p/2-2}$, for
 $p<4$ we have
$$\frac{t^{-1+\frac{p}{2}}}{{p-1}+2 t}+\frac{(1+t)^{-2+\frac{p}{2}} ({p-1}+t)}{{p-1}+2 t}>\left(\frac{{p-1}+p t+2 t^2}{{p-1}+2 t}\right)^{-2+\frac{p}{2}}.$$ Further
$$\left(\frac{{p-1}+p t+2 t^2}{{p-1}+2 t}\right)^{-2+\frac{p}{2}}>\left(\frac{{p-1}+p t}{{p-1}}\right)^{-2+\frac{p}{2}}.$$
So $Q(0)\ge 1$.

For $4\le p$
 we need to show that

$$\frac{t^{-1+\frac{p}{2}}}{{p-1}+2 t}+\frac{(1+t)^{-2+\frac{p}{2}} ({p-1}+t)}{{p-1}+2 t}<\left(\frac{{p-1}+p t}{{p-1}}\right)^{-2+\frac{p}{2}},$$
i.e.
$$\frac{t^{-1+\frac{p}{2}}}{{p-1}+2 t}+\frac{(1+t)^{-2+\frac{p}{2}} ({p-1}+t)}{{p-1}+2 t}<\left(1+t+\frac{t}{{p-1}}\right)^{-2+\frac{p}{2}}.$$
The last inequality is equivalent with the trivial inequalities
$$\frac{t(t^{-2+\frac{p}{2}} -(t+1)^{-2+\frac{p}{2}})}{{p-1}+2 t}<0<\left(1+t+\frac{t}{{p-1}}\right)^{-2+\frac{p}{2}}-(1+t)^{-2+\frac{p}{2}}.$$
For $s=\pi/4$, and $\epsilon=r^2 t$, $$Q(s)=2^{2-\frac{p}{2}} \left(\frac{{p-1}+p t}{(p-1) (1+2 t)}\right)^{2-\frac{p}{2}}.$$
So $$Q(s)\left\{
           \begin{array}{ll}
             \ge 1, & \hbox{if $p\le 4$;} \\
             \le 1, & \hbox{if $p\ge 4$.}
           \end{array}
         \right.$$

The other cases can be treated in a similar way.
\end{proof}

\begin{proof}[Proof of Theorem~\ref{newt}]
Assume that $p\le 4$. By using the Green formula \eqref{green}, and Lemma~\ref{new} we obtain
\[\begin{split}r\int_0^{2\pi}\frac{\partial}{\partial r}  F_\epsilon dt&=\int_0^r\int_0^{2\pi}\rho\Delta F_\epsilon  d\rho  dt.
\\&
 \le  \frac{p}{p-1}\int_0^r\int_0^{2\pi}\rho (\Delta U_\epsilon +\Delta V_\epsilon)d\rho dt\\&=  r \frac{p}{p-1}\int_0^{2\pi}\frac{\partial}{\partial r}  (U_\epsilon+V_\epsilon) dt.
 \end{split}\]
Dividing by $r$ and integrating in $[0,1]$ with respect to $r$ we obtain
\[\begin{split}\int_0^{2\pi}  [F_\epsilon(e^{it})&-F_\epsilon(0)]  dt \\&\le   \frac{p}{p-1}\bigg(\int_0^{2\pi} [U_\epsilon(e^{it})-U_\epsilon(0)] dt+\int_0^{2\pi} [V_\epsilon(e^{it})-V_\epsilon(0)] \bigg)dt.\end{split}\]
Letting $\epsilon\to 0$ we obtain
$$\int_0^{2\pi}  |f(e^{it})|^p dt \le   \frac{p}{p-1}\left(\int_0^{2\pi} |u(e^{it})|^p dt+\int_0^{2\pi} |v(e^{it})|^p \right)dt.$$
Similarly we prove the related inequality for $p\ge 4$.
\end{proof}


\begin{thebibliography}{1}
\bibitem{axl}
\textsc{S.  Axler, P. Bourdon,  W. Ramey}, {\it Harmonic function
theory}, Springer Verlag New York 1992.

\bibitem{eb}
\textsc{E. Bajrami}, \emph{Improvement of isoperimetric type inequality for harmonic functions in the case $p=4$}, to appear in Indagationes Mathematicae, http://dx.doi.org/10.1016/j.indag.2016.10.003.




\bibitem{tc}

\textsc{T. Carleman,} {\it Zur Theorie der Minimalfl\"achen.} (German) Math.
Z. 9 (1921),  no. 1-2, 154--160.
\bibitem{spw}
\textsc{Sh. Chen, S. Ponnusamy, and X. Wang,} \emph{The isoperimetric type and Fejer-Riesz type inequalities for pluriharmonic mappings,}  Scientia Sinica Mathematica (Chinese Version), 44(2)(2014), 127--138.

\bibitem{duren}
\textsc{P. L. Duren,} \emph{Theory of $H^p$ spaces.} Pure and Applied Mathematics, Vol. 38 Academic Press, New York-London 1970 xii+258 pp.
\bibitem{hw}
\textsc{F. Hang, X. Wang, X. Yan,} {\it Sharp integral inequalities for
harmonic functions.}  Comm. Pure Appl. Math. 61 (2008), no. 1,
54--95.

\bibitem{hh}
\textsc{H. Hedenmalm,} Bloch functions, asymptotic variance, and geometric zero packing,  arXiv:1602.03358.

\bibitem{verb1}
\textsc{B. Hollenbeck, I. E. Verbitsky,} \emph{Best constants for the Riesz projection.} J. Funct. Anal. 175 (2000), no. 2, 370--392.



\bibitem{ka}
\textsc{D.  Kalaj,}  \emph{Isoperimetric inequality for the polydisk.} Ann. Mat. Pura Appl. (4) 190 (2011), no. 2, 355--369
\bibitem{ka1}
\textsc{D. Kalaj, R. Me\v strovi\' c,} \emph{Isoperimetric type inequalities for harmonic functions.} J. Math. Anal. Appl. 373 (2011), no. 2, 439--448.

\bibitem{mp}
\textsc{M. Mateljevi\'c, M.  Pavlovi\'c,} {\it New proofs of the
isoperimetric inequality and some generalizations.} J. Math. Anal.
Appl. 98 (1984), no. 1, 25--30.

\bibitem{pik}
\textsc{S. K. Pichorides,}
\emph{On the best values of the constants in the theorems of M. Riesz, Zygmund and Kolmogorov.}
Collection of articles honoring the completion by Antoni Zygmund of 50 years of scientific activity, II.
Studia Math. 44 (1972), 165--179.


\bibitem{ks}
\textsc{K. Strebel:} {\it Quadratic differentials. Ergebnisse der Mathematik
und ihrer Grenzgebiete (3)}, 5. Springer-Verlag, Berlin, 1984.
xii+184 pp.

\bibitem{ver}
\textsc{I. E. Verbitsky,}
\emph{Estimate of the norm of a function in a Hardy space in terms of the norms of its
real and imaginary parts.}
Linear operators.
Mat. Issled. No. 54 (1980), 16-20, 164--165.
\end{thebibliography}
\end{document}